\documentclass[11pt]{amsart}
\usepackage[T1]{fontenc}
\usepackage[cp1250]{inputenc}
\usepackage{amsmath}
\usepackage{amssymb}
\usepackage{amscd}
\usepackage[colorlinks]{hyperref}
\usepackage{tikz}
\usetikzlibrary{cd}
\usepackage{clock}
\usepackage{enumerate}
\usepackage[all]{xy}

\usepackage{graphicx}

\makeatletter
\DeclareRobustCommand{\cev}[1]{%
  {\mathpalette\do@cev{#1}}%
}
\newcommand{\do@cev}[2]{%
  \vbox{\offinterlineskip
    \sbox\z@{$\m@th#1 x$}%
    \ialign{##\cr
      \hidewidth\reflectbox{$\m@th#1\vec{}\mkern4mu$}\hidewidth\cr
      \noalign{\kern-\ht\z@}
      $\m@th#1#2$\cr
    }%
  }%
}
\makeatother

\newenvironment{pf}{\begin{proof}}{\end{proof}}

\newcommand{\ide}{\operatorname{id}}

\newcommand{\Pe}{{\mathbb{P}}}

\newcommand{\fK}{{\mathfrak{K}}}
\newcommand{\fL}{{\mathfrak{L}}}

\newcommand{\fC}{{\mathfrak{C}}}

\newcommand{\kfF}{{\mathfrak{F}_{\rm fin}}}
\newcommand{\kpF}{{\mathfrak{F}_{\rm pro}}}

\newcommand{\Ob}{\operatorname{Obj}}

\newcommand{\dst}{\star\star}

\newcommand{\subs}{\subseteq}

\renewcommand{\int}{\operatorname{int}}
\newcommand{\fra}{Fra\"iss\'e }

\newcommand{\amor}{\SelectTips{cm}{11}}

\newcommand{\map}[3]{#1\colon #2 \to #3} 
\newcommand{\pair}[2]{(#1, #2)} 
\newcommand{\img}[2]{#1[#2]} 
\newcommand{\obj}[1]{\operatorname{Obj}\left(#1\right)}
\newcommand{\nic}[1]{}
\newcommand{\id}[1]{{\operatorname{id}_{#1}}}

\newtheorem{theorem}{Theorem}[section]

\newtheorem{lemma}[theorem]{Lemma}

\newtheorem{proposition}[theorem]{Proposition}
\theoremstyle{definition}

\author{Wies{\l}aw Kubi\'s} 
\address{Wies{\l}aw Kubi\'s\\  Institute of Mathematics, Czech Academy of Sciences, Czechia\\
	Institute of Mathematics\\
	Cardinal Stefan Wyszynski University in Warsaw\\
	Warszawa, Poland} \email{kubis@math.cas.cz}
\author{Andrzej Kucharski} 
\address{Andrzej Kucharski \\ University of Silesia in Katowice \\ Katowice, Poland} \email{andrzej.kucharski@us.edu.pl} 
\author{ S\l awomir Turek}
\address{S\l awomir Turek\\ Institute of Mathematics\\
	Cardinal Stefan Wyszynski University in Warsaw\\
	Warszawa, Poland}
\email{s.turek@uksw.edu.pl}
\date{ver. 2026-08-18, \clocktime\today}

\title{Homogeneity of profinite graphs}

\begin{document}
	
	\maketitle
	
	\begin{abstract}
		Profinite graphs are inverse limits of sequences of nonempty finite graphs. There exists a ``generic'' graph $\Pe$ in this category, although its edge relation is rather poor: only isolated vertices and isolated edges. On the other hand, its topological structure (besides being homeomorphic to the Cantor set) is not trivial, as we show in this note.
		
		We prove a Knaster--Reichbach type result: Every topological isomorphism   $h\colon K\to L$ where  $K$ and  $L$ are nowhere dense closed subsets of $\Pe$ consisting of isolated vertices can be extended to a topological automorphism of the whole graph $\Pe$.
		\\
		{\bf Keywords:}  \fra\ limit, the Cantor set, profinite graph.
		\\
		{\bf MSC (2020):}
		54B30,   	
		05C63 
		18F60,   	
		54C25,   	
		54C15.   	
	\end{abstract}

	\section{Introduction}
	
A graph is typically called \emph{homogeneous} if every isomorphism between its small subgraphs extends to an automorphism, where ``small'' typically means ``finite'', although it could also refer to some special class of subgraphs (e.g. finite connected ones, singletons, etc.).
The well-known universal homogeneous graph is the \emph{Rado graph} (also called \emph{random}, due to a work of Erd\H os -- Renyi, independent of Rado). The Rado graph can be viewed as the most complicated countable graph (containing all countable graphs) and the most symmetric one, in the sense that every isomorphism between its finite subgraphs extends to an automorphism.
The Rado graph (as well as any countable graph) can be built as the union of a chain of finite graphs\footnote{In fact, Rado constructed his graph on the nonnegative integers, providing a quite natural and simple formula. Erd\H os and Renyi discovered it through a random process of building a chain of graphs.}, even in such a way that at each step exactly one new vertex is added. A dual procedure, where embeddings are replaced by \emph{quotient homomorphisms} (see the definition below) leads to \emph{profinite graphs}.
Recall that profinite graphs can be characterized either as limits of inverse sequences of finite graphs or as metrizable Boolean spaces with a closed edge relation. While in the classical graph theory the edge relation is typically irreflexive, in the inverse setting we require that it is reflexive, simply because we would like a singleton to be the image of a quotient homomorphism.

In the category of profinite graphs, a natural variant of homogeneity is: Every isomorphism between finite quotients lifts to a topological automorphism, namely, a graph automorphism that is at the same time a homeomorphism.
The profinite analogue of the Rado graph indeed exists, it is a graph $\Pe$, characterized uniquely (up to a topological isomorphism) by the following property:
\begin{enumerate}
	\item[(P)] Given a quotient homomorphism $\map f {G'}{G}$ between finite graphs, given a quotient homomorphism $\map q \Pe G$, there exists a quotient homomorphism $\map{q'}{\Pe}{G'}$ such that $q = f \circ q'$.
\end{enumerate}
Recall that in the case of infinite profinite graphs, quotient homomorphisms are required to be continuous.

As it happens, from the point of view of graph theory, the graph $\Pe$ is a bit disappointing: It consists of disjoint edges and a bunch of $0$-degree vertices. More precisely, $\Pe$ contains an uncountable (continuum-sized) collection of edges, none of them adjacent to another, and a dense $G_\delta$ set of vertices of degree zero (we call them \emph{isolated vertices}).
Clearly, $\Pe$ cannot be homogeneous with respect to points, namely, the canonical action of its automorphism group is not transitive, simply because there are vertices of degree one as well as vertices of degree zero.

We consider  undirected graphs with unique edges and with loops, in other words, structures with a symmetric reflexive binary relation.
Profinite graphs have been studied 
by several authors, see \cite{Camerlo},\cite{DroGoe},\cite{IrwSol}, \cite{KubFra},\cite{GGK}. The notion of a projective \fra family of topological structures has been introduced in \cite{IrwSol}. Irwin and Solecki gave a dualization of the \fra limit construction from model theory, see \cite{Fra}. Using projective \fra theory, Camerlo showed  (\cite{Camerlo}) that there exists a profinite graph $\Pe$  which is uniquely characterized by  a certain kind of projectivity with respect to finite graphs. It turns out that
$\Pe$ is homeomorphic to the Cantor set and it does not contain paths of length $>1$. Our approach fits within the framework of category-theoretic \fra limits, developed in \cite{DroGoe},\cite{KubFra},\cite{PechPech},\cite{PechPech2}. 
We prove that an embedding $\eta\colon K\to\Pe$ of
a fixed 0-dimensional compact space $K$ into $\Pe$  such that $\eta[K]$ is a nowhere dense and  $\eta[K]$ consisting of isolated vertices in $\eta[K]$ is
a \fra limit (generic object) in a suitable category. From this we derive   a Knaster--Reichbach type theorem for graph structures, i.e. every topological isomorphism   $h\colon K\to L$ between   nowhere dense closed sets $K,L\subset \Pe$ such that $K,L$ have only  isolated vertices  can be extended to a topological automorphism of the whole graph $\Pe$.

\section{Preliminaries}
	For undefined notions concerning category theory we refer to \cite{MacLane}. We will use the concepts and facts related to the theory of \fra sequences presented in~\cite{KubFra}, in the context of reversed categories.
We say that category $\fK$ is \emph{directed}
if for every $A,B\in\obj{\fK}$ there are $C\in\obj{\fK}$ and arrows $f\in\fK(C,A)$ and $g\in\fK(C,B)$.
We say that $\fK$ has the \emph{amalgamation property} if for every $A,B,C\in\obj{\fK}$ and for every morphisms $f\in\fK(B,A)$, $g\in\fK(C,A)$ there exist $D\in\obj{\fK}$ and morphisms $f'\in\fK(D,B)$ and $g'\in\fK(D,C)$ such that $f\circ f' = g\circ g'$.

A \textit{\fra sequence} in $\mathfrak{K}$ is an countable inverse sequence $\cev{u}$  satisfying the following conditions:
\begin{enumerate}
	\item[(U)] For every object $X$ of $\mathfrak{K}$ there exists $n<\omega$ such that $\mathfrak{K}(U_n, X)\ne\emptyset$.
	\item[(A)] For every $n<\omega$ and for every morphism $f\colon Y\to U_n$, where $Y\in \Ob(\mathfrak{K})$, there exist $k\ge n$ and $g\colon U_k\to Y$ such that $u_n^k=f\circ g$.
\end{enumerate} 
 
 We recall some basic results concerning \fra theory  developed by the first author~\cite{KubFra}.
 
\begin{theorem}[cf. \cite{KubFra} Corollary~3.8]\label{exists}
 Assume that $\mathfrak{K}$ is a directed category with the amalgamation property and countable (up to isomorphism). Then $\mathfrak{K}$ has a \fra sequence.
\end{theorem}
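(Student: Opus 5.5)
We build the \fra sequence directly, by a bookkeeping construction in the reversed setting; arrows go from $U_{n+1}$ to $U_n$. First fix a countable set $\mathcal{C}\subseteq\obj{\fK}$ containing a representative of every isomorphism class. Since $\fK$ is countable up to isomorphism, each hom-set between objects of $\mathcal{C}$ may be taken to be countable; this is implicit in the hypothesis as used in \cite{KubFra}. If not, we replace "countable" by "countably many arrows up to isomorphism of arrows", which is what is needed. We build an inverse sequence $U_0\leftarrow U_1\leftarrow U_2\leftarrow\cdots$ with bonding arrows $u_n^{n+1}$ and composites $u_n^k$. The sequence must satisfy (U) and (A). We may take all $U_n\in\mathcal{C}$, since replacing objects by isomorphic ones and adjusting the arrows is harmless.

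To ensure (U), we enumerate $\mathcal{C}=\{X_0,X_1,\dots\}$ and interleave "directedness steps". At such a step, given $U_n$ and $X_j$, directedness yields $C$ with arrows $C\to U_n$ and $C\to X_j$. We set $U_{n+1}=C$, with the first arrow as $u_n^{n+1}$. Then $\fK(U_{n+1},X_j)\neq\emptyset$.

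To ensure (A), we use amalgamation steps with a standard diagonal bookkeeping. Fix a bijection $\pi\colon\omega\to\omega\times\omega$ with $\pi(m)=(n,i)$ implying $n\le m$. Whenever $U_n$ is constructed, we list all pairs $(Y,f)$ with $Y\in\mathcal{C}$ and $f\in\fK(Y,U_n)$ as $(Y_{n,i},f_{n,i})_{i<\omega}$; this list is countable. At stage $m$ with $\pi(m)=(n,i)$, we treat $f=f_{n,i}\colon Y\to U_n$ together with $u_n^m\colon U_m\to U_n$. Amalgamation yields $D$, $f'\colon D\to U_m$ and $g'\colon D\to Y$ with $u_n^m\circ f'=f\circ g'$. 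We set $U_{m+1}=D$ and $u_m^{m+1}=f'$. Then $u_n^{m+1}=f\circ g'$, so (A) holds for $(n,f)$ with $k=m+1$ and $g=g'$.

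The two kinds of steps are alternated, say by even and odd stages. Every pair $(n,f)$ is handled because $\pi$ is onto, and every $X_j$ is handled. For a general $Y\in\obj{\fK}$ in (A), not necessarily in $\mathcal{C}$, we transport $f$ along an isomorphism $Y\cong Y'\in\mathcal{C}$ and compose back.

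The only delicate point is the countability bookkeeping. Listing the arrows into $U_n$ must be possible, and the enumeration must be arranged so that arrows into $U_n$ are treated only after $U_n$ exists. The second point is exactly the condition $n\le m$ on $\pi$. The first point is where the hypothesis "countable" has to be read as including countability of hom-sets, or countability up to isomorphism of the arrow category. I expect this to be the main obstacle only as a matter of interpreting the hypothesis; the rest is routine.
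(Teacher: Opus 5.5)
Your proposal is correct and is essentially the standard argument behind the cited result: the paper gives no proof of its own and only refers to \cite{KubFra}, where the theorem is proved by the same kind of diagonal bookkeeping, alternating directedness steps for (U) with amalgamation steps along an enumeration $\pi$ satisfying $n\le m$ for (A). Reading ``countable'' as including countability of hom-sets is the convention in \cite{KubFra}, and in this paper's application it holds automatically, since the hom-sets of $\mathfrak{C}_K$ consist of quotient maps between finite graphs and are therefore finite.
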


Let us assume that $\mathfrak{K}$ is a full subcategory of a bigger category $\mathfrak{L}$ such that
the following compatibility conditions are satisfied.
\begin{enumerate}
	\item [(L0)] All $\mathfrak{L}$-arrows are epimorphisms.
	\item [(L1)] Every inverse sequence  in $\mathfrak{K}$ has the limit in $\mathfrak{L}$.
	\item [(L2)] Every $\mathfrak{L}$-object is the limit of an inverse sequence in $\mathfrak{K}$.
	\item
	[(L3)] For every inverse sequence $\cev{x}=\{X_n\colon n<\omega\}$ in $\mathfrak{K}$ with $X = \lim \cev{x}$ in $\mathfrak{L}$, for every $\mathfrak{K}$-object $Y$,
	for every $\mathfrak{L}$-arrow $f\colon X \to Y$ there exist $n<\omega$ and a $\mathfrak{K}$-arrow $f'\colon  X_n \to Y$ such
	that $f = f'\circ  x_n,$ where $x_n\colon X\to X_n$ is the projection.
\end{enumerate}
	\amor
\labelmargin-{1.8pt}
$$\xymatrix@R=2.8pc@C=2.7pc
{
	\labelmargin+{1pt}
	X_0& X_1\ar@{->>}[l]_{x^1_0}&\dots\ar@{->>}[l]_{x^2_1}\ar@{->>}[l]& X_n\ar@{-->>}[rd]_{f'}\ar@{->>}[l]&\dots\ar@{->>}[l]& X\ar@/_1.6pc/[ll]_{x_n}\ar@{->>}[dl]^f\ar@{->>}[l]\\
	   &                        & &&Y&
	}$$

Now, an $\mathfrak{L}$-object $U$ will be called {\em $\mathfrak{K}$-generic} if
\begin{enumerate}
	\item [(G1)] $\mathfrak{L}(U,X) \ne\emptyset$ for every $X \in \text{Obj} (\mathfrak{K})$.
	\item
	[(G2)] For every $\mathfrak{K}$-arrow $f\colon Y \to X$, for every $\mathfrak{L}$-arrow $g\colon U \to X$ there exists an
	$\mathfrak{L}$-arrow $h\colon U \to Y$ such that $f \circ h = g$.
\end{enumerate}

\begin{theorem}[\cite{KKT25}]\label{fra-gen}
	Assume that  $\mathfrak{K} \subs \mathfrak{L}$ satisfy {\rm(L0)}--{\rm(L3)} and $\mathfrak{K}$ has the amalgamation property. If  $\cev{u}$ is a \fra sequence in $\mathfrak{K}$, then $U=\lim\cev{u}$ is a $\mathfrak{K}$-generic.
\end{theorem}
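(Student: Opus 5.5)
We verify (G1) and (G2) for $U=\lim\cev{u}$, with $u_n\colon U\to U_n$ the limit projections; these are $\mathfrak{L}$-arrows, and $U$ exists in $\mathfrak{L}$ by (L1).

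\emph{(G1).} Fix $X\in\Ob(\mathfrak{K})$. By (U) there are $n<\omega$ and a $\mathfrak{K}$-arrow $g\colon U_n\to X$. Then $g\circ u_n\in\mathfrak{L}(U,X)$, since $\mathfrak{K}$ is a full subcategory of $\mathfrak{L}$ and composites of $\mathfrak{L}$-arrows are $\mathfrak{L}$-arrows.

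\emph{(G2).} Fix a $\mathfrak{K}$-arrow $f\colon Y\to X$ and an $\mathfrak{L}$-arrow $g\colon U\to X$.
\begin{enumerate}
\item Apply (L3) to the sequence $\cev{u}$, whose limit in $\mathfrak{L}$ is $U$, and to the arrow $g$. This gives $n<\omega$ and a $\mathfrak{K}$-arrow $g'\colon U_n\to X$ with $g=g'\circ u_n$.
\item Use the amalgamation property in $\mathfrak{K}$ on the pair $g'\colon U_n\to X$ and $f\colon Y\to X$. This gives $W\in\Ob(\mathfrak{K})$ and arrows $p\colon W\to U_n$, $q\colon W\to Y$ with $g'\circ p=f\circ q$.
\item Apply (A) to $p\colon W\to U_n$. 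This gives $k\ge n$ and $r\colon U_k\to W$ with $u^k_n=p\circ r$.
\item Set $h=q\circ r\circ u_k\colon U\to Y$, which is an $\mathfrak{L}$-arrow.
\end{enumerate}
Then
\[
f\circ h=f\circ q\circ r\circ u_k=g'\circ p\circ r\circ u_k=g'\circ u^k_n\circ u_k=g'\circ u_n=g .
\]

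The only point needing care is step 1: it is where (L3) is used, and it needs $U$ to be the limit of a $\mathfrak{K}$-sequence, which holds by construction. Condition (L0) is not actually used in this argument. The amalgamation step is necessary because (A) only gives factorizations through arrows into some $U_n$, while $f$ lands in an arbitrary $X$.
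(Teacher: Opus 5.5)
Your argument is correct and complete. The paper gives no proof of this statement (it is quoted from \cite{KKT25}), and your route is the standard one for this result: factor $g$ through some $U_n$ by (L3), amalgamate $g'$ with $f$ in $\mathfrak{K}$, then absorb the amalgam into a later $U_k$ by (A).
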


A \textit{graph} is a structure of the form $G = (V(G), E(G))$, where $E(G)\subseteq V(G)^2$ is a symmetric and reflexive relation.
The elements of $E(G)$ are called \textit{edges} and the elements of $V(G)$ are called \textit{vertices}.
We say that vertices $x,y$ are \textit{adjacent} if $\pair xy \in E(G)$.
We shall often identify $G$ and $V(G)$. A vertex $v$ in a graph $G$ is an \textit{isolated vertex} if it is adjacent to itself only. 
A \textit{homomorphism of graphs} is a map $\map f G H$ that preserves edges. Given a graph $G$, an \textit{induced subgraph} is a subset $H \subs G$ with the induced graph relation, namely, $(x,y)\in E $ in $H$ if an only if $(x, y)\in E$ in $G$. A \textit{subgraph} often refers to a subset $A$ of $G$ such that the inclusion $A$ is a graph homomorphism. 

An \textit{embedding} $\map eGH$ is a one-to-one graph homomorphism such that whenever $\pair {e(x)}{e(y)}$ is adjacent in $H$, $\pair xy$ is adjacent in $G$ as well. That is, an embedding $\map eGH$ is an isomorphism between $G$ and the induced subgraph $\img eG$ of $H$.  

A graph homomorphism $\map fXY$ is \textit{strict} if for every adjacent $p,q$ in the range of $f$ there exist adjacent $a,b\in X$ such that $f(a)=p$ and $f(b)=q$. A strict homomorphism which is additionally surjective will be called a \textit{quotient map}.

A graph $G=(V(G),E(G))$ is \textit{profinite graph} if it is an inverse limit  of finite graphs $G_n=\pair {V(G_n)}{E(G_n)}$ with bonding maps $f^{n+1}_n:G_{n+1}\to G_n,$ which are quotient maps.  More precisely, $V(G)=\lim(V(G_n),f_n^{n+1})$, where each $V(G_n)$ is discrete space and $(x,y)\in E(G)$ iff $(x_n,y_n)\in E(G_n)$ for each $n\in\omega$.  
It is worth noting that the relation $E(G)$ is a closed subset of the product $V(G)\times V(G)$.  A graph isomorphism $f$ between profinite graphs $G$ and $H$ is a {\em topological isomorphism}, whenever $f$ is a homeomorphism.

\section{Categories of finite graphs}
	
We shall work with  the category $\kfF$  whose  objects are finite graphs and arrows are quotient maps, and  also we shall consider  the category $\kpF$  whose  objects are profinite graphs and arrows are quotient maps.

Camerlo \cite{Camerlo} using concept of a 
projective \fra families (see \cite{IrwSol}) studied graph and its relatives. It  was considered the category of $\kfF$  whose  obj ects are finite graphs and arrows are quotient maps. We shall recall some of results from \cite{Camerlo} 
into the framework of Droste \& G\"obel theory~\cite{DroGoe} or Kubi\'s~\cite{KubFra}.

\begin{theorem}[cf. Camerlo \cite{Camerlo}]\label{camerlo1}
There exists a \fra sequence  $\cev{g}=(G_n\colon n<\omega)$ in the category $\kfF$.  In particular,  the inverse limit $\Pe=\lim \cev{g}$ is the Cantor set as topological space and  
is a unique  profinite graph with respect to the following property:
\begin{enumerate}
	\item[$(\star)$]

	for every continuous quotient maps  $h\colon \Pe\to Z$  and $ f\colon Y\to Z$,  where  $Z,Y\in\kfF,$ there exists a $\kpF$-arrow $g\colon \Pe\to Y$  such that the following diagram commutes.
	
\end{enumerate} 	

\amor
$$\xymatrix{
	\Pe\ar@{->>}[d]_h \ar@{-->>}[dr]^g &\\
	Z & Y.\ar@{->>}[l]^f}$$
	
\end{theorem}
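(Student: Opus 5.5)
We apply Theorem~\ref{exists} to $\kfF$ to get the \fra sequence, identify its limit with the Cantor set, and derive $(\star)$ and uniqueness from Theorem~\ref{fra-gen}. So the plan has four checks: the hypotheses of Theorem~\ref{exists} for $\kfF$, the conditions (L0)--(L3) for $\kfF\subs\kpF$, the topology of the limit, and the back-and-forth argument for uniqueness.

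\emph{Hypotheses of Theorem~\ref{exists}.} $\kfF$ is countable up to isomorphism, since it has only finitely many graphs of each finite size. It is directed: the one-point graph is a quotient of every nonempty graph. Given $A,B$, take the disjoint union $A\sqcup B$ with an extra vertex $*$ adjacent to everything, or more simply with all vertices mutually adjacent. Then map $A\sqcup B\to A$ by the identity on $A$ and by a constant $a_0$ on $B$; strictness holds because every edge of $A$ is already hit. Amalgamation is the key point. Given quotients $f\colon B\to A$ and $g\colon C\to A$, take the fibre product
\[
D=\{(b,c)\in B\times C: f(b)=g(c)\},
\]
with $(b,c)$ adjacent to $(b',c')$ iff $b\sim b'$, $c\sim c'$, and $f(b)$ is adjacent to $f(b')$ (automatic). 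Surjectivity of the projections is clear. Strictness of the projection $D\to B$ is where the argument must be careful: an edge $b\sim b'$ maps to an edge $f(b)\sim f(b')$ of $A$, and strictness of $g$ gives adjacent $c,c'$ in $C$ with $g(c)=f(b)$ and $g(c')=f(b')$. Hence $(b,c)\sim(b',c')$ in $D$. Reflexivity of the edge relation handles loops. So $D$ with its projections amalgamates, and Theorem~\ref{exists} yields a \fra sequence $\cev g$.

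\emph{Conditions (L0)--(L3) and the conclusion.} These hold for $\kfF\subs\kpF$:
\begin{itemize}
\item[(L0)] Surjections are epimorphisms.
\item[(L1)] The inverse limit exists, and its projections are quotient maps by a compactness argument: a finite strictness witness at each level, then a König-type argument for adjacent threads.
\item[(L2)] This is the definition of a profinite graph.
\item[(L3)] A continuous map into a finite discrete space factors through some $G_n$, and the edge relation is handled by compactness.
\end{itemize}
Theorem~\ref{fra-gen} then makes $\Pe$ $\kfF$-generic, and (G2) is exactly $(\star)$.

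\emph{Topology.} $\Pe$ is compact, metrizable and zero-dimensional. It has no isolated points by (A): given $G_n$ and a vertex $v$, let $Y$ be $G_n$ with $v$ doubled, the two copies adjacent to each other and to all neighbours of $v$. This $Y$ is a quotient onto $G_n$, so some $G_k$ splits $v$. By Brouwer's theorem $\Pe$ is the Cantor set.

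\emph{Uniqueness.} Suppose a profinite graph $Q$ satisfies $(\star)$. Write $Q=\lim\cev x$ with $\cev x$ in $\kfF$. Condition (G1) follows from $(\star)$ together with the one-point graph and directedness. Then build a back-and-forth ladder between $\cev g$ and $\cev x$ using (A) on the $\Pe$ side, $(\star)$ on the $Q$ side, and (L3) to push arrows from the limits down to finite stages. The resulting commuting zigzag induces mutually inverse continuous quotient maps, i.e.\ a topological isomorphism. This is the standard argument of \cite{KubFra}.

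The main obstacle is the strictness verifications: in the amalgamation and in (L1)/(L3), where limits must preserve strictness. For the latter I would use compactness of the closed edge relation $E\subs\Pe^2$. A nonempty inverse system of finite sets of adjacent preimage pairs has a thread, which gives adjacent points of the limit mapping onto the prescribed edge.
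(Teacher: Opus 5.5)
Your proof is correct. The paper gives no proof of its own for this theorem. It cites Camerlo's projective \fra theory and only announces that it is recasting the result in the Kubi\'s framework. Your argument is precisely that recasting:
\begin{itemize}
\item existence comes from Theorem~\ref{exists};
\item $(\star)$ is (G2), obtained from Theorem~\ref{fra-gen};
\item uniqueness comes from the standard back-and-forth;
\item amalgamation uses the same fibre product with the product edge relation, and (L3) uses the same factor-through-a-finite-stage argument, that the paper later writes out for $\fC_K$ in Lemma~\ref{l:3.4}.
\end{itemize}
Compared with simply citing Camerlo, your route keeps everything inside the machinery the paper actually uses afterwards. Your vertex-splitting argument via (A) also gives the Cantor-set claim in an elementary way.

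There is one slip to fix, in the directedness step. Making all vertices of $A\sqcup B$ mutually adjacent breaks the map onto $A$: with the identity on $A$, two non-adjacent vertices of $A$ become adjacent, so the map is not a homomorphism. Adding a vertex $*$ adjacent to everything fails the same way in general, and you never say where $*$ goes. Any of the following works instead:
\begin{itemize}
\item the plain disjoint union, with no edges between $A$ and $B$;
\item the product $A\times B$ with the product edge relation, as in Lemma~\ref{l:3.4};
\item your own first remark: the one-point graph is terminal, so directedness follows from amalgamation.
\end{itemize}

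A minor clarification on (L3). The factor $f'\colon X_n\to Y$ is a homomorphism because the projection $x_n$ is strict: every edge of $X_n$ lifts to an edge of $X$. This needs no further compactness argument. Its strictness is inherited directly from that of $f$.
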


To prove our main theorem, we need some necessary lemmas on $\Pe$.

\begin{lemma}\label{l:3.2}
If  $A,B\subseteq\Pe$ are closed disjoint subsets such that $(a,b)\not\in E(\Pe)$ for all  $a\in A$ and $b\in B$, then there are disjoint clopen subsets $W_A,W_B$ such that $W_A\cup W_B=\Pe$ and $A\subseteq W_A$ and $B\subseteq W_B$ and $(a,b)\not\in E(\Pe)$ for all $a\in W_A$ and $b\in W_B.$
\end{lemma}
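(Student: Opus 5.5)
We use only that $\Pe$ is a compact zero-dimensional metrizable space (the Cantor set, by Theorem~\ref{camerlo1}) and that $E(\Pe)$ is closed in $\Pe\times\Pe$. The plan is a compactness argument: first separate $A$ from $B$ with clopen sets carrying no edges between them, then extend these two sets to a partition of $\Pe$.

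\textbf{Step 1.} The set $A\times B$ is compact and disjoint from the closed set $E(\Pe)$. Products of clopen sets form a base of $\Pe\times\Pe$, so each point $(a,b)\in A\times B$ has a clopen rectangle $U_{a,b}\times V_{a,b}$ around it that misses $E(\Pe)$. Fix $a\in A$. Finitely many of the $V_{a,b}$ cover $B$. Let $U_a$ be the intersection of the corresponding $U_{a,b}$ and $V_a$ the union of those $V_{a,b}$. Then $U_a$ is a clopen neighbourhood of $a$, $V_a\supseteq B$, and $(U_a\times V_a)\cap E(\Pe)=\emptyset$. Next, finitely many $U_a$ cover $A$. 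Let $U$ be their union and $V$ the intersection of the matching $V_a$. Then $A\subseteq U$, $B\subseteq V$, both are clopen, and $(U\times V)\cap E(\Pe)=\emptyset$. Since $E(\Pe)$ is reflexive, this also gives $U\cap V=\emptyset$.

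\textbf{Step 2.} We must now cover the clopen remainder $R=\Pe\setminus(U\cup V)$ without creating edges between the two sides. The obvious choice $W_A=U\cup R$, $W_B=V$ fails in general, because a point of $R$ may be adjacent to a point of $V$. This is the main obstacle, and a pure compactness argument does not see it. The fix is to replace $(A,B)$ by the larger pair $(U, V)$ and use the graph structure of $\Pe$. Each vertex of $\Pe$ has degree at most one, that is, $\Pe$ has no paths of length $>1$ (Theorem~\ref{camerlo1} and Camerlo's description). So for $x\in R$ there are three cases:
\begin{itemize}
\item[(a)] $x$ has no neighbour other than itself;
\item[(b)] its unique neighbour lies in $U$ or in $R$;
\item[(c)] its unique neighbour lies in $V$.
\end{itemize}
Define $W_B=V\cup\{x\in R:\ x \text{ has a neighbour in } V\}$ and $W_A=\Pe\setminus W_B$. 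By degree $\le1$, no point of $R$ is adjacent to both $U$ and $V$. A neighbour of a point of $W_B\cap R$ cannot lie in $W_A$: it is either the $V$-neighbour, or the point itself. Points of $V$ have no neighbour in $U$. A point of $V$ adjacent to some $x\in R$ forces $x\in W_B$. Hence there are no edges between $W_A$ and $W_B$.

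\textbf{Step 3.} It remains to check that $W_B$ is clopen. The set $\{x\in R:\ x \text{ has a neighbour in } V\}$ is the projection of $E(\Pe)\cap(R\times V)$, a compact set, so it is closed. Its complement in $R$ is the set of $x\in R$ with a neighbour in $U\cup R$ or with no other neighbour; by degree $\le 1$ this equals $R$ minus the projection of $E(\Pe)\cap(R\times V)$. Openness needs more. If $x_n\to x$ with $x_n$ adjacent to $y_n\in V$, pass to a subsequence with $y_n\to y\in V$; then $(x,y)\in E(\Pe)$ by closedness, so this only re-proves closedness.

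For openness I would instead use the property $(\star)$, which is the harder step. Take the quotient $h\colon\Pe\to Z$ onto a finite graph that is constant on each of the clopen sets $U$, $V$ and on the pieces of a fine clopen partition. Pass through a \fra stage $G_n$, via (L3) in the form of continuity of the projections, fine enough that the edge relation between the images of $U$, $V$, $R$ is read off in $G_n$. Then define $W_A,W_B$ as preimages of a partition of $V(G_n)$ with no edges across it. If a clean finite partition is not available at every stage, I would refine the sequence and use the limit definition of $E(\Pe)$ to pick a stage where no $G_n$-edge joins the two parts.
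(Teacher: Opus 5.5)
Your Step 1 is correct and matches the first paragraph of the paper's proof. The gap is in Steps 2--3, and it is worse than an unproved openness claim: the set $N=\{x\in R:\ x\text{ has a neighbour in }V\}$ is in general \emph{not} open. Vertices of degree zero are dense in $\Pe$ (as recalled in the introduction), and none of them lies in $N$, so $N$ has empty interior. Since $N$ is closed, $W_B=V\cup N$ is clopen only when $N=\emptyset$. That is precisely the case in which the naive choice $W_A=U\cup R$, $W_B=V$ already works. So no better openness argument can rescue the degree-$\le 1$ construction.

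The fallback in Step 3 is not a proof either. You never say which finite quotient map to lift along. Moreover, the existence of a partition of some $V(G_n)$ with no edges across cannot be extracted from the inverse-limit description of $E(\Pe)$ alone, because the lemma fails for general profinite graphs. Take $x_n\to x_\infty$ with edges $\{x_n,x_{n+1}\}$, and let $A=\{x_1\}$, $B=\{x_\infty\}$. Every clopen partition separating $A$ from $B$ cuts some edge $\{x_n,x_{n+1}\}$.

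The missing idea is a specific application of $(\star)$ that splits the remainder $R$. Assume $U,V,R\neq\emptyset$ (otherwise the lemma is trivial).
\begin{enumerate}
\item Let $h\colon\Pe\to S=\{0,1,2\}$ send $U,V,R$ to $0,1,2$ respectively. Put $(0,1)\notin E(S)$, and put $(0,2)\in E(S)$ iff some point of $U$ is adjacent to a point of $R$; define $(1,2)$ similarly using $V$. Then $h$ is a continuous quotient map.
\item Let $T=\{0,1,3,4\}$ with $(0,3)\in E(T)$ iff $(0,2)\in E(S)$, $(1,4)\in E(T)$ iff $(1,2)\in E(S)$, and no other edges except loops. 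Let $f\colon T\to S$ fix $0,1$ and send $3,4$ to $2$; then $f$ is a quotient map.
\item Property $(\star)$ yields a quotient map $g\colon\Pe\to T$ with $f\circ g=h$. Hence $g^{-1}(0)=U$, $g^{-1}(1)=V$ and $g^{-1}(\{3,4\})=R$.
\item Set $W_A=g^{-1}(\{0,3\})$ and $W_B=g^{-1}(\{1,4\})$. An edge between them would be mapped by $g$ to an edge of $T$ between $\{0,3\}$ and $\{1,4\}$, and there is none.
\end{enumerate}

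This is the paper's argument, with one correction worth noticing: $3$ must be joined only to $0$, and $4$ only to $1$. The paper's literal definition joins both $3$ and $4$ to $i$ whenever $(i,2)\in E(S)$. With that definition, whenever $(1,2)\in E(S)$, strictness of $g$ forces an edge between $g^{-1}(3)\subseteq W_A$ and $V\subseteq W_B$. Note also that no degree bound on $\Pe$ is needed for this argument.
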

\begin{pf}
Fix $a\in A$ and $b\in B$. Since $E\subseteq\Pe^2$ is closed subset there are  disjoint clopen neighborhoods $U_a$ and $U_b$ of 
$a$ and $b$, respectively, such that $(x,y)\not\in E(\Pe)$ for all $x\in U_a$ and $y\in U_b$.  By 
 the compactness of the set $B$
  for any $a\in A$ there are  disjoint clopen neighborhoods $U_a$ and $U_B$ of 
$a$ and $B$, respectively, such that $(x,y)\not\in E(\Pe)$ for all $x\in U_a$ and $y\in U_B.$ Using this time the compactness of $A$ we conclude that there are disjoint clopen subsets $V_0,V_1$ such that $A\subseteq V_0$ and $B\subseteq V_1$ and $(x,y)\not\in E(\Pe)$ for all $x\in V_0$ and $y\in V_1.$ 

Let $S=\{0,1,2\}$. Define a graph relation $E(S)$ as follows $E(S)$ is symmetric and reflexive and  $(2,i)\in E(S)$ if and only if there are $x\in V_i$ and $y\in \Pe\setminus (V_0\cup V_1)$ such that $(x,y)\in E(\Pe)$, where $i\in\{0,1\}$ and $(0,1)\not\in E(S).$  Define $h\colon\Pe\to S$ by 
$$h(x)=
\begin{cases}
	0,& \mbox{ if } x\in V_0,\\
	1,& \mbox{ if } x\in V_1,\\
	2,&\mbox{ otherwise.}
\end{cases}$$
Clearly $h$ is quotient map. Let $T=\{0,1,3,4\}$. Define a graph relation $E(T)$ as follows $E(T)$ is symmetric and reflexive and $(i,3),(i,4)\in E(T)\Leftrightarrow(i,2)\in E(S)$ for $i\in\{0,1\}$ and $(0,1),(3,4)\not\in E(T).$
Let $f\colon T\to S$ be defined by  

$$f(x)=
\begin{cases}
	2,& \mbox{ if } x\in \{3,4\},\\
	x,&\mbox{ if }x\in \{0,1\}.
\end{cases}$$
It's easy to see that  $f$ is quotient. By property $(\star)$ there is a quotient map $g\colon \Pe\to T$ such that 
$h=g\circ f.$ Since $(3,4)\not\in E(T)$ the sets $g^{-1}(3),g^{-1}(4)$ are clopen disjoint sets such that $(a,b)\not\in E(\Pe)$ for all $a\in g^{-1}(3)$ and $b\in g^{-1}(4).$ Let $W_A=V_0\cup g^{-1}(3)$ and $W_B=V_1\cup g^{-1}(4)$. Therefore $A\subseteq W_A$ and $B\subseteq W_B$  are closed disjoint subsets such that $(a,b)\not\in E(\Pe)$ for all $a\in W_A$ and $b\in W_B$ and $W_A\cup W_B=\Pe.$
\end{pf}

\begin{lemma}\label{l:3.3}
If 	$W\subseteq \Pe$ is clopen non-empty subset, then 
there are two points $a,b\in W$ with $(a,b)\in E(\Pe).$ 
\end{lemma}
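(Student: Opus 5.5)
The plan is to apply property $(\star)$ of Theorem~\ref{camerlo1} to a two-point quotient of $\Pe$ that records $W$, and to lift it to a finite graph in which the vertex corresponding to $W$ has been split into two \emph{adjacent} vertices. Strictness of the lifted quotient map then yields an edge $(a,b)$ with $a\neq b$, both in $W$. Here ``two points'' means distinct points, since $E(\Pe)$ is reflexive and so $(a,a)\in E(\Pe)$ holds trivially.

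First, let $S$ be the graph with vertex set $\{0,1\}$ if $W\neq\Pe$, and $\{0\}$ if $W=\Pe$. The relation $E(S)$ is reflexive and symmetric, and $(0,1)\in E(S)$ if and only if there are $x\in W$ and $y\in\Pe\setminus W$ with $(x,y)\in E(\Pe)$. Define $h\colon\Pe\to S$ by $h(x)=0$ for $x\in W$ and $h(x)=1$ otherwise. Since $W$ is clopen, $h$ is continuous. It is surjective because $W\neq\emptyset$. It is a homomorphism and strict directly from the definition of $E(S)$; loops are lifted by loops. Hence $h$ is a $\kpF$-arrow onto a finite graph.

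Next, let $T$ be obtained from $S$ by replacing the vertex $0$ with two vertices $3,4$. Put $(3,4)\in E(T)$, and declare $(i,1)\in E(T)$ for $i\in\{3,4\}$ if and only if $(0,1)\in E(S)$. Let $f\colon T\to S$ send $3,4\mapsto 0$ and $1\mapsto 1$. Then $f$ is a surjective homomorphism, and it is strict: the loop at $0$ is the image of the loop at $3$, and if $(0,1)\in E(S)$ it is the image of $(3,1)$. Thus $f$ is a $\kfF$-arrow, and $(\star)$ gives a quotient map $g\colon\Pe\to T$ with $f\circ g=h$.

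Finally, since $(3,4)\in E(T)$ and $g$ is strict, there are adjacent $a,b\in\Pe$ with $g(a)=3$ and $g(b)=4$. Then $a\neq b$, and $h(a)=f(3)=0=h(b)$, so $a,b\in W$ and $(a,b)\in E(\Pe)$, as required. There is no real obstacle here; the only points needing care are checking that $f$ is strict (so that $(\star)$ applies) and treating the degenerate case $W=\Pe$ separately, where $S$ is a single vertex.
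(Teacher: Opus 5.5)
Your proof is correct and follows essentially the same route as the paper. Both encode $W$ by a two-vertex quotient $h\colon\Pe\to S$, lift it along a quotient $f\colon T\to S$ whose fibre over $0$ contains an edge between two distinct vertices, and use strictness of the map $g$ given by $(\star)$ to find the edge inside $W$. The differences are cosmetic: the paper keeps the vertex $0$ and adds a separate edge component $\{2,3\}$ over it instead of splitting $0$, and your one-vertex $S$ treats the case $W=\Pe$ more cleanly than the paper's brief reduction to $\Pe\setminus W\neq\emptyset$.
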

\begin{pf}
Let $W$ is clopen non-empty subset of $\Pe$. Let $S=\{0,1\}$. We can assume that $\Pe\setminus W$ is non-empty set, since $\Pe$ contains non-trivial edge. We define a graph relation $E(S)$ as follows $E(S)$ is symmetric and reflexive and $(0,1)\in E(S)$ if and only if there are $x\in W$ and $y\in \Pe\setminus W$ such that $(x,y)\in E(\Pe)$. Define $h\colon \Pe\to S$ in the following way 

$$h(x)=
\begin{cases}
	0,& \mbox{ if } x\in W,\\
	1,&\mbox{ otherwise.}
\end{cases}$$
Clearly, $h$ is  quotient map. Let $T=\{0,1,2,3\}$ and define a graph relation $E(T)=E(S)\cup \{2,3\}^2.$ Let $f\colon T\to S$ be defined by  

$$f(x)=
\begin{cases}
	0,& \mbox{ if } x\in \{0,2,3\},\\
	1,&\mbox{ otherwise.}
\end{cases}$$
We see at once that $f$ is quotient map. By  $(\star)$ we get a quotient map $g\colon \Pe\to T$ such that 
$h=f\circ g.$ Since $(2,3)\in E(T)$  there are two vertices  $a,b\in \Pe$ such that $(a,b)\in E(\Pe)$ and $g(a)=2$ and $g(b)=3$. It remains to notice that $a,b\in W$. 
\end{pf}

Comma categories in \fra theory have already been successfully used in ~\cite{PechPech}, \cite{PechPech2},\cite{BKW}, \cite{KKT25}.	
 Fix a profinite space $K$, i.e.   a nonempty compact 0-dimensional second countable topological space.

We define the category $\fL_K$ as follows. The objects of $\mathfrak{L}_K$ are continuous mappings $f\colon  K\to X$, with $X\in\kpF$.
Given two $\mathfrak{L}_K$-objects $f_0\colon  K \to X_0$, $f_1 \colon  K \to X_1$, an $\mathfrak{L}_K$-arrow from $f_1$
 to $f_0$ is a quotient continuous map $q \colon  X_1\to X_0$ satisfying $q \circ  f_1 = f_0$. The composition in 
 $\mathfrak{L}_K$ is the usual composition of mappings.  Let $\mathfrak{C}_K$  be the full
subcategory of $\mathfrak{L}_K$ whose objects are continuous mappings $f \colon  K \to X,$ with $X\in\kfF.$

\begin{lemma}\label{l:3.4}
	The category $\mathfrak{C}_K$ is countable (up to isomorphism) and directed and has the amalgamation property. 
	Moreover $\fC_K\subseteq\fL_K$ satisfy $(L0)-(L3)$.
\end{lemma}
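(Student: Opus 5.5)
The proof checks each claim in turn. The basic observation is that an object $f\colon K\to X$ with $X$ finite (hence discrete) is just a partition of $K$ into finitely many clopen sets, indexed by a subset of $X$, together with the finite graph $X$. Two things are needed to combine objects: the categorical product of reflexive graphs, in which $((a,b),(a',b'))$ is an edge iff $(a,a')$ and $(b,b')$ are edges, and its pullback variant. Compactness then handles the limits.

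\emph{Countability.} $K$ is compact, $0$-dimensional and second countable, so it has only countably many clopen sets, hence only countably many finite clopen partitions. Up to isomorphism there are countably many finite graphs. An object $f\colon K\to X$ is determined up to isomorphism by the partition $\{f^{-1}(x)\}$ together with the finite data describing $X$ and the labelling. So there are countably many objects up to isomorphism.

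\emph{Directedness and amalgamation.} Given $f_0\colon K\to X_0$ and $f_1\colon K\to X_1$, let $D=X_0\times X_1$ with the product edge relation, and set $k\mapsto (f_0(k),f_1(k))$.
\begin{itemize}
\item The projections are surjective.
\item They are strict: an edge $(a,a')$ of $X_0$ lifts to the edge $((a,b),(a',b))$, using reflexivity at $b$.
\end{itemize}
For amalgamation, take arrows $g_0\colon X_0\to X$ and $g_1\colon X_1\to X$ with $g_0f_0=f=g_1f_1$. Let $D=\{(a,b): g_0(a)=g_1(b)\}$ with the induced product relation. The map $(f_0,f_1)$ lands in $D$ because $g_0f_0=g_1f_1$.
\begin{itemize}
\item Surjectivity of $D\to X_0$ uses surjectivity of $g_1$.
\item Strictness: if $(a,a')\in E(X_0)$, then $(g_0a,g_0a')\in E(X)$. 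Strictness of $g_1$ gives adjacent $b,b'$ with $g_1b=g_0a$ and $g_1b'=g_0a'$, so $((a,b),(a',b'))$ is an edge of $D$.
\end{itemize}
The same argument works symmetrically for $D\to X_1$, and the square commutes by construction.

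\emph{(L0)--(L3).}
\begin{itemize}
\item (L0): quotient maps are surjective, hence epimorphisms.
\item (L2): given $f\colon K\to X$ with $X=\lim X_n$, put $f_n=x_n\circ f$. Then $f$ is the limit of the sequence of the $f_n$.
\item (L1): given a sequence $f_n\colon K\to X_n$ with quotient bonding maps, let $X=\lim X_n$, which is profinite, and $f=\lim f_n$, which is continuous. The projections $x_n$ are strict by lifting an edge step by step through the strict bonding maps. For universality, let $q_n\colon Y\to X_n$ be a compatible cone of quotient maps from $g\colon K\to Y$. The induced $q\colon Y\to X$ is a continuous homomorphism with $q\circ g=f$. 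It is surjective and strict by compactness: for an edge $(x,x')$ of $X$, the sets of pairs of adjacent points of $Y$ mapped by $q_n$ onto $(x_n,x'_n)$ are nonempty, closed and decreasing, so they have a common point.
\item (L3): let $q\colon X\to Y$ be an $\fL_K$-arrow with $Y$ finite. Its fibres are clopen, so $q$ factors as $q=q'\circ x_n$ for large $n$. To make $q'$ a homomorphism, let $C=\{(x,y)\in X^2:(qx,qy)\notin E(Y)\}$. This set is clopen and disjoint from $E(X)$. Moreover $E(X)=\bigcap_n (x_n\times x_n)^{-1}[E(X_n)]$ is a decreasing intersection of closed sets. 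By compactness, some $(x_n\times x_n)^{-1}[E(X_n)]$ already misses $C$, and then $q'$ preserves edges. Finally $q'$ is surjective and strict because $q=q'\circ x_n$ is, and $q'\circ f_n=q\circ f$.
\end{itemize}

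I expect the main obstacle to be the compactness bookkeeping in (L1) and (L3). In (L1) this is strictness of the limit projections and of the induced map into the limit; in (L3) it is choosing $n$ so that $q'$ is a homomorphism. The finite-level constructions are routine.
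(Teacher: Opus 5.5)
Your proof is correct and follows essentially the paper's route: the product with the product edge relation for directedness, the pullback for amalgamation, and factoring through a finite stage via clopen fibres for (L3), while your countability and (L0)--(L2) arguments fill in what the paper leaves unproved or calls easy. The extra compactness step in (L3) is valid but unnecessary: once $q=q'\circ x_n$, the fact that $q'$ preserves edges already follows from the strictness of the projection $x_n$, which you established in (L1).
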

\begin{pf}
The proof of this lemma is similar to the proof of \cite[Lemma 3.2]{KKT25}, we just need to take care of the 
graph structure.
Let $f\colon K\to X$,  $g\colon K\to Y$, $h\colon K\to Z$ be  $\fC_K$-objects and $q_1\colon f\to h$, $q_2\colon g\to h$ 
be $\fC_K$-arrows. We  get the following equations $q_1\circ f=h=q_2\circ g$. Consider  
the space $W=\{(x,y)\in X\times Y\colon q_1(x)=q_2(y)\}$   with surjections $f_1\colon W\to X$ and $g_1\colon W\to Y$ such that $q_1\circ f_1=q_2\circ g_1$ and define the graph structure on 
$W$ in the following way: 
		$$((a_1,b_1),(a_2,b_2))\in E(W)\Leftrightarrow (a_1,a_2)\in E(X)\;\&\; (b_1,b_2)\in E(Y),$$
		where $(a_1,b_1),(a_2,b_2)\in W.$ 
	\SelectTips{cm}{11}
	$$\xymatrix{
		& &X\ar@{->>}[dr]^{q_1} &\\
		W\ar@{->>}[rru]^{f_1}\ar@{->>}[drr]_{g_1} & K\ar[ur]_f\ar[rd]^g\ar[rr] ^h\ar[l]_(.36){k}&& Z\\
		&& Y\ar@{->>}[ur]_{q_2} & }$$ 
	
From the pullback property it follows that exists a unique $k\colon K\to W$ such that  $f=f_1\circ k$ and $g=g_1\circ k$.
 If $((a_1,b_1),(a_2,b_2))\in E(W)$ then  $(f_1((a_1,b_1)),f_1((a_2,b_2)))=(a_1,a_2)\in E(X)$ and 
 $(g_1((a_1,b_1)),g_1((a_2,b_2)))=(b_1,b_2)\in E(Y)$. On the other hand if $(b_1,b_2)\in E(Y)$ there are $a_1',a_2'\in X$ 
 such that $q_1(a_1')=q_2(b_1)$ and $q_1(a_1')=q_2(b_1)$. Since $q_1$ is quotient map there are $a_1,a_2\in X$ such that 
 $(a_1,a_2)\in E(X)$ and $q_1(a_1)=q_1(a_1')$ and $q_1(a_2)=q_1(a_2')$. Thus we get $((a_1,b_1),(a_2,b_2))\in E(W)$. This proves  that the category $\fC_K$ has the  amalgamation property.
 
	

	In order to show that $\fC_K$ is directed, let $f\colon K\to X$ and 
	$g\colon K\to Y$ be objects of $\fC_K.$
	Consider $Z=X\times Y$ with discrete topology. Define the graph structure on 
	$Z$ in the following way:

$$((a_1,b_1),(a_2,b_2))\in E(Z)\Leftrightarrow (a_1,a_2)\in E(X)\;\& \;(b_1,b_2)\in E(Y)$$ 
	
	and let $h=(f,g)$.  Obviously projections $\pi_X\colon X\times Y\to X$ and $\pi_Y\colon X\times Y\to Y$ are quotient maps such that $\pi_X\circ h=f$ and $\pi_Y\circ h=g$, this complete the proof that $\fC_K$ is directed.
	
We shall have established the lemma if we	prove that $\fC_K\subset\fL_K$ fulfills condition $(L0)-(L3)$. It's easy to check that  $\fC_K\subseteq\fL_K$ satisfy $(L0)-(L2).$  What is left is to show  $(L3)$. Let $f\colon X\to Y$ be a $\fL_K$-arrow and $Y$ be 
a	$\fC_K$-object.  We find $n<\omega $ such that  $x_n[f^{-1}(y)]\cap x_n[f^{-1}(z)]=\emptyset$  for all $y\ne z$ and $y,z\in Y.$  Define a map $f'\colon X_n\to Y$ by $f'(x)=y\Leftrightarrow x\in x_n[f^{-1}(y)]$ for $y\in Y$ and $x\in X_n.$ Thus $f'$ is quotient and $f'\circ x_n=f.$  
\end{pf}

\begin{proposition}\label{prop:1}
	There exists a \fra sequence in $\mathfrak{C}_K.$
\end{proposition}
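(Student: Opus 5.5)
The proposition follows directly from Theorem~\ref{exists} applied to $\mathfrak{C}_K$. That theorem needs three hypotheses on the category: it is directed, it has the amalgamation property, and it is countable up to isomorphism. All three are asserted in Lemma~\ref{l:3.4}, so the plan is to invoke that lemma and then check that its hypotheses are really established. The one point I would argue more carefully is countability up to isomorphism, since the proof of Lemma~\ref{l:3.4} concentrates on amalgamation, directedness and (L3).

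\emph{Countability.} An object of $\mathfrak{C}_K$ is a continuous map $f\colon K\to X$ with $X$ a finite graph. We may assume $V(X)\subseteq\omega$, so there are only countably many choices of the finite graph $X$.

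Continuity of $f$ means exactly that the fibres $f^{-1}(x)$, for $x\in X$, form a finite partition of $K$ into clopen sets, some possibly empty. Since $K$ is compact, $0$-dimensional and second countable, it has only countably many clopen subsets. Hence for a fixed $X$ there are only countably many continuous maps $K\to X$.

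Each object is therefore isomorphic to one in a countable set. Moreover, for two given objects the hom-set $\mathfrak{C}_K(f_1,f_0)$ consists of maps between finite sets, so it is finite.

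\emph{Amalgamation and directedness.} These are taken from Lemma~\ref{l:3.4}. The construction there uses the pullback $W=\{(x,y)\in X\times Y\colon q_1(x)=q_2(y)\}$ with the product edge relation, together with the induced map $k=(f,g)\colon K\to W$. Two things need confirming for $W$ to be a legitimate amalgam.
\begin{enumerate}
\item The projections $f_1,g_1$ are surjective strict homomorphisms. Surjectivity follows from surjectivity of $q_1,q_2$. Strictness is shown in the lemma by lifting an edge of $Y$ along the quotient map $q_1$.
\item The map $k$ is continuous, which holds because $f$ and $g$ are continuous.
\end{enumerate}
Directedness is the special case of the plain product $X\times Y$, i.e.\ amalgamation over a one-point graph.

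\emph{Conclusion.} With these verifications in place, Theorem~\ref{exists} yields a \fra sequence in $\mathfrak{C}_K$. The countability step is the only one with any real content, and the fact that $K$ has countably many clopen sets handles it.
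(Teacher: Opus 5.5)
Your proposal is correct and follows the paper's approach: the paper's proof is simply an application of Theorem~\ref{exists} to $\mathfrak{C}_K$, using Lemma~\ref{l:3.4} for its hypotheses. Your check of countability (countably many clopen subsets of $K$, so countably many objects up to isomorphism, and finite hom-sets) just fills in a detail that the lemma asserts without proof.
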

\begin{pf}
By Theorem \ref{exists} and Lemma \ref{l:3.4} 	there exists a \fra sequence in $\mathfrak{C}_K.$
\end{pf}

\begin{theorem}\label{fra-sequ}
	Assume that  $\cev{\phi}=(\phi_n\colon n<\omega)$  is a  \fra sequence in $\mathfrak{C}_K$, where $\phi_n\colon K\to U_n$ for each $n<\omega.$  Then 
	\begin{enumerate}
		\item[{\rm(1)}] $\cev{u}=(U_n\colon n<\omega)$ is a {\fra}sequence in the category $\kfF$.
		\item[{\rm(2)}]The limit map $\phi_{\omega}\colon K\to \lim \cev{u}$ has a left inverse i.e. there is $r\colon \lim \cev{u}\to K$
		such that $r\circ \phi_{\omega}=\ide_K$,
		\item[{\rm(3)}] The image $\phi_{\omega}[K]$ is a nowhere dense set and the following condition is fulfilled
		\begin{enumerate}
			\item[$(\dst)$] for all disjoint closed subsets $A,B\subseteq \phi_{\omega}[K]$   there are disjoint clopen sets $W_A,W_B\subseteq \Pe$ such that $W_A\cup W_B=\Pe$ and $A\subseteq W_A,B\subseteq W_B$ and   $(a,b)\not\in E(\Pe)$ for all $a\in W_A$ and $b\in W_B.$
		\end{enumerate}
		
	\end{enumerate}
\end{theorem}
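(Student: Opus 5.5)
For (1) I will check the conditions (U) and (A) for $\cev{u}$ in $\kfF$ by lifting finite graphs to $\fC_K$-objects. For (U), given a finite graph $X$, I pick any vertex $x_0\in X$ and the constant map $c\colon K\to X$ with value $x_0$; this is a $\fC_K$-object, so (U) for $\cev{\phi}$ gives an $n$ and a $\fC_K$-arrow $\phi_n\to c$, which is in particular a quotient map $U_n\to X$. For (A), let $f\colon Y\to U_n$ be a quotient map of finite graphs. I need a continuous $\psi\colon K\to Y$ with $f\circ\psi=\phi_n$. Since $K$ is zero-dimensional and $f$ is surjective, I choose for each $u\in U_n$ a preimage $y_u\in f^{-1}(u)$ and set $\psi=\{u\mapsto y_u\}\circ\phi_n$. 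This is continuous because $\phi_n$ is continuous into a discrete space. Then $f$ is a $\fC_K$-arrow $\psi\to\phi_n$, and (A) for $\cev{\phi}$ yields $k\ge n$ and $g\colon U_k\to Y$ with $f\circ g=u_n^k$. Hence $\cev{u}$ is \fra in $\kfF$, and by Theorem~\ref{camerlo1} its limit is $\Pe$.

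For (2), I expect the key point to be that $\phi_\omega$ is a topological embedding and that $K$ is a retract. I first show that $\phi_\omega$ is injective. Given $p\ne q$ in $K$, I take a clopen partition of $K$ separating them and let $Y$ be the object $\phi_n$ refined by this partition. Concretely, $Y=\{(u,i)\}$ ranges over the pairs realised by $\phi_n$ and the partition, together with all of $U_n$ so that the map is surjective. The edges are pulled back from $U_n$, so the projection $Y\to U_n$ is a quotient map. Applying (A) shows that some $\phi_k$ separates $p$ and $q$. For the retraction I use a different test object. For each $n$ and each finite clopen partition $\mathcal P$ of $K$, I want an arrow $r_k\colon U_k\to \mathcal P$, to a graph with the complete edge relation, such that $r_k\circ\phi_k$ is the partition map and these maps are compatible. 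The plan is to build the retraction inductively along a sequence of finer and finer partitions $\mathcal P_m$. Using $(A)$ with $Y=U_n\times\mathcal P_m$, with edges pulled back from $U_n$ and $\psi=(\phi_n,\text{partition map})$, I get $g\colon U_k\to Y$ and then compose with the projection to $\mathcal P_m$. Compatibility between consecutive $m$ is arranged by applying (A) to $Y'=U_k\times\mathcal P_{m+1}$ while remembering the previously chosen map, restricting to the pairs consistent with $\mathcal P_{m+1}\to\mathcal P_m$. In the limit this produces a continuous $r\colon\Pe\to \lim\mathcal P_m=K$ with $r\circ\phi_\omega=\ide_K$. The main obstacle is to ensure surjectivity and strictness of the fibered test graphs, which is where the pulled-back edges are needed.

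For (3), nowhere density: given a nonempty basic clopen set $u_n^{-1}(v)$, I take $Y=U_n\sqcup\{v'\}$, where $v'$ is a copy of $v$ with the same neighbours, and let $\psi=\phi_n$, which never hits $v'$. Applying (A), the preimage of $v'$ is a nonempty clopen set inside $u_n^{-1}(v)$ that misses $\phi_\omega[K]$. For $(\dst)$, let $A,B$ be disjoint closed subsets of $\phi_\omega[K]$. I pick a clopen partition $K=P_A\sqcup P_B$ with $\phi_\omega^{-1}A\subseteq P_A$ and $\phi_\omega^{-1}B\subseteq P_B$. Then I form the test object $Y=U_n\times\{0,1\}$, with edges $((u,i),(u',j))$ present iff $(u,u')\in E(U_n)$ and $i=j$, together with an extra copy of each vertex to keep the map strict. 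Here $\psi=(\phi_n,\chi_{P_B})$. The resulting $g\colon U_k\to Y$ composed with the second coordinate splits $\Pe$ into two clopen sets with no edges between them, containing $A$ and $B$ respectively. This is the same mechanism as in Lemma~\ref{l:3.2}. I expect the strictness verification for this $Y$ to be the delicate step: every edge of $U_n$ must be realised inside one layer, which is why each layer is a full copy of $U_n$.
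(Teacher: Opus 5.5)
Your proposal is correct, and it differs from the paper mainly in (2). Part (1) is the paper's argument. In (3) you use axiom (A) at a finite stage where the paper uses the genericity of $\phi_\omega$ from Theorem~\ref{fra-gen}; the mechanism is the same. For $(\dst)$ the paper's test object is simpler: the $\fC_K$-object $K\to F$, where $F$ consists of two isolated vertices, with an arrow onto it supplied by (G1). Your second-coordinate map $U_k\to\{0,1\}$ is exactly such an arrow, so the layered copy of $U_n$ could be dropped. The ``extra copy of each vertex'' is not needed either, since the layered $U_n\times\{0,1\}$ already makes the projection strict.

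In (2) the paper gets $r$ in one line, applying (G1) to the $\fL_K$-object $\ide\colon K\to K'$, where $K'$ is $K$ with only isolated vertices. You instead build $r$ by hand, as the limit of compatible maps $U_{k_m}\to\mathcal P_m$ obtained from repeated use of (A) on fibred graphs with pulled-back edges. This is precisely the standard argument that the Fra\"iss\'e limit admits arrows onto arbitrary $\fL_K$-objects. The paper leaves that step implicit: (G1) is stated only for $\fC_K$-objects, and $K'$ is infinite in general. So your route is more self-contained. The paper's route gives in addition that $r$ is a quotient graph map onto $K'$, whereas the edge structure you put on $\mathcal P_m$ plays no role.

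Your separate injectivity argument is redundant, because a left inverse already forces $\phi_\omega$ to be injective.
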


\begin{pf}(1)
	It is clear that the sequence $\cev{\phi}$ induces the sequence $\cev{u}=(U_n\colon n<\omega)$ in the category $\kfF$.  According to the definition, it must be shown that the sequence $\cev{u}$ satisfies the conditions (U) and (A).
	
	First we show property (U).  Let $X\in \kfF$. Choose a point $a\in X$ and define  a map $f:K\to X$ as follows $f(x)=a$ for all $x\in X.$ The map $f$ is the  object in $\mathfrak{C}_K$. Since $\cev{\phi}$  is a  \fra sequence in $\mathfrak{C}_K$ there is $n<\omega$ and quotient map $g:U_n\to X$ such that $g\circ \phi_n=f.$
	
	Now we prove condition $(A)$. Let $Y\in\kfF$ be a finite graph and $p\colon Y\to U_n$ be a quotient map. If we forget a structure of graph of $Y, p$ and $U_n$ then  define a continuous map $g\colon K\to Y$ as follows $g(x)=y_x$ for all $x\in Y,$ where $y_x$ is chosen form  $\phi_n^{-1}(p(x))$. Then the map $g$ satisfies  $p\circ g=\phi_n$ and  it turns out that $g$ is  a  $\mathfrak{C}_K$-arrow. Thus there exists $k\ge n$ and a morphism $h\colon \phi_k\to g$ such that $p\circ h=u_n^k$ because $\cev{\phi}$ is a {\fra}sequence in $\mathfrak{C}_K$.

	\amor
	\labelmargin-{1.8pt}
	$$\xymatrix@R=2.8pc@C=2.7pc
	{&&K\ar[dll]_{\phi_0}\ar[dl]_(.6){\phi_1}\ar[dr]_(0.6){\phi_n}\ar[drrr]_{\phi_k}\ar[dd]_(.66)g\ar[drrrrr]^
		{\phi_{\omega}}&&&\\ 
		\labelmargin+{1pt}
		U_0& U_1\ar@{->>}[l]^{u^1_0}&\dots\ar@{->>}[l]^{u^2_1}\ar@{->>}[l]& U_n\ar@{->>}[l]&\dots\ar@{->>}[l]& U_k\ar@{->>}[dlll]^h\ar@{->>}[l]&\dots\ar@{->>}[l]&\Pe\ar@/^1.6pc/[ll]^{u_k}\\
		&& Y\ar@{->>}[ru]^p&&&
	}$$
	
	(2) Let $K'=K$ is equal as topological space and $K'$ consists of isolated vertices. The identity map $\ide\colon K\to K'$ is an object of $\mathfrak{L}_K$, and $\mathfrak{C}_K\subseteq \mathfrak{L}_K$ satisfies condition (L0)-(L3). By Theorem \ref{fra-gen} and condition $(G_1)$ there is quotient continuous map  $r\colon \Pe\to K'$	such that $r\circ \phi_{\omega}=\ide_K,$ (see also \cite{BKW})
	
	(3)  Suppose that  that the image $\phi_{\omega}[K]$ is not a nowhere dense set.  There are $n\in\omega$ and  
	$x\in U_n$ such that $u_n^{-1}(x)\subseteq \phi_{\omega}[K].$ Let $Y=U_n\times\{0\}\cup(\{x\}\times \{1\})$ and
	$((x,0)(y,0))\in E(Y)\Leftrightarrow (x,y)\in E(U_n)$ and $(x,1)$ is the isolated vertex. Define a map 
	$g\colon K\to Y$ as follows $g(x)=(\phi_n(x),0)$ and let $p$ be projection, $p(y,i)=y$ for all $(y,i)\in Y.$ Since 
	$g\colon K\to Y$ is an object of $\mathfrak{L}_K$ and $p$ is quotient map and $\phi_\omega\colon K\to\Pe$  is  $\mathfrak{C}_K$-generic, there is quotient map $f\colon \Pe\to Y$ such that $f\circ \phi_\omega=g.$ Note that
	$$f^{-1}(p^{-1}(\{x\}))=u_n^{-1}(x)\subseteq \phi_\omega[K].$$
	Thus $\phi_\omega^{-1}(f^{-1}(\{(x,1)\}))\ne\emptyset,$ but $\emptyset=g^{-1}(\{(x,1)\})= \phi_\omega^{-1}(f^{-1}(\{(x,1)\}))$ a contradiction.
	 
	In order to prove $(\dst)$ let $A,B\subseteq \phi_{\omega}[K]$ be disjoint closed subsets. There are disjoint clopen sets $U_A, U_B\subseteq K$ such that $U_A\cup U_B=K$ and $A\subseteq\phi_\omega[U_A]$ and $B\subseteq\phi_\omega[U_B]$.  Let $F=\{0,1\}$ be a discrete space and $F$ as a graph  consists of isolated vertices.
	Let $f\colon K\to F$ be defined as follows 
	$$f(x)=
	\begin{cases}
		0 &  \mbox{ if }x\in U_A,\\
		1 &  \mbox{ otherwise.}	
	\end{cases}$$ 
	The  map $f\colon K\to F$ is an object of $\mathfrak{L}_K$, and $\mathfrak{C}_K\subseteq \mathfrak{L}_K$ satisfy (L0)-(L3). By Theorem \ref{fra-gen} and condition $(G_1)$ there is quotient continuous map  $g\colon \Pe\to F$	such that $g\circ \phi_{\omega}=f.$ Since $g$ is quotient continuous map the sets $W_A=g^{-1}(0)$ and $W_B=g^{-1}(1)$ have the desired properties. 
\end{pf}

\begin{theorem}\label{generic}
Let $\eta\colon K\to\Pe$ be an embedding such that $\eta[K]$ is a nowhere dense and  $\eta[K]$ consisting of isolated vertices in $\eta[K].$  Then $\eta\colon K\to\Pe$ is $\mathfrak{C}_K$-generic.
\end{theorem}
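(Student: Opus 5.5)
Since a $\mathfrak{C}_K$-arrow $X\to\{\ast\}$ exists for every $\mathfrak{C}_K$-object $K\to X$, condition (G1) follows from (G2) applied with target the one-point graph $\{\ast\}$. So the whole task is (G2). Fix a $\mathfrak{C}_K$-arrow $p\colon Y\to X$ from $g\colon K\to Y$ to $f\colon K\to X$, and an $\mathfrak{L}_K$-arrow $q\colon\Pe\to X$ with $q\circ\eta=f$. We must find a continuous quotient map $h\colon\Pe\to Y$ with $p\circ h=q$ and $h\circ\eta=g$. I read the hypothesis as: every point of $\eta[K]$ is an isolated vertex of $\Pe$. With the weaker reading, that $\eta[K]$ has no edges between distinct points, the argument below still works, with slightly more care when choosing the witnesses in the first step.

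\emph{Step 1: small neighbourhoods of the pieces of $\eta[K]$.} For $y\in Y$ put $K_y=g^{-1}(y)$. The sets $\eta[K_y]$ are disjoint closed sets, $\eta[K_y]\subseteq q^{-1}(p(y))$, and there are no edges between distinct points of $\eta[K]$. For each edge of $X$ choose, by strictness of $q$, a witness edge in $\Pe$; likewise, by Lemma~\ref{l:3.3}, choose a witness edge inside each fibre $q^{-1}(x)$. Finitely many points are chosen, and none of them lies in $\eta[K]$, because points of $\eta[K]$ are isolated vertices. Apply Lemma~\ref{l:3.2} finitely many times, and use that $\eta[K]$ is nowhere dense. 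This yields clopen sets $N_y\supseteq\eta[K_y]$ with the following properties:
\begin{enumerate}
\item[(a)] $N_y\subseteq q^{-1}(p(y))$ and the $N_y$ are pairwise disjoint;
\item[(b)] there is no edge between $N_y$ and $N_{y'}$ for $y\ne y'$;
\item[(c)] $N_y$ avoids all chosen witness points;
\item[(d)] each remainder $R_x=q^{-1}(x)\setminus\bigcup_y N_y$ is a nonempty clopen set.
\end{enumerate}
Let $Z$ be the finite quotient graph of $\Pe$ determined by the clopen partition $\{N_y\}\cup\{R_x\}$, and let $r\colon\Pe\to Z$ be the quotient map. Then $q=\bar q\circ r$ for an obvious quotient map $\bar q\colon Z\to X$. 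By (c), the graph $Z$ has an edge $R_x R_{x'}$ whenever $xx'\in E(X)$, and a loop-like edge inside each $R_x$ realised by a genuine edge of $\Pe$.

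\emph{Step 2: a finite amalgam.} Build a finite graph $T$ with vertex set $\{a_y\colon y\in Y\}\cup\{(R_x,y)\colon p(y)=x\}$. Define $\pi\colon T\to Z$ by $a_y\mapsto N_y$ and $(R_x,y)\mapsto R_x$, and $\sigma\colon T\to Y$ by $a_y\mapsto y$ and $(R_x,y)\mapsto y$. The edges of $T$ are declared as follows:
\begin{enumerate}
\item[(i)] $(R_x,y)$ is adjacent to $(R_{x'},y')$ iff $yy'\in E(Y)$;
\item[(ii)] $a_y$ is adjacent to $(R_x,y')$ iff $N_yR_x\in E(Z)$ and $yy'\in E(Y)$;
\item[(iii)] there are no edges among the vertices $a_y$.
\end{enumerate}
We then check three things. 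First, $\pi$ and $\sigma$ are homomorphisms; for (i) this uses that $p$ is a homomorphism, so $yy'\in E(Y)$ gives $R_{p(y)}R_{p(y')}\in E(Z)$ by the witness edges. Second, $\sigma$ is strict, because every edge of $Y$ is realised among the $R$-copies. Third, $\pi$ is strict: an edge $N_yR_x$ of $Z$ is lifted by $a_y$ and $(R_x,y)$, and an edge $R_xR_{x'}$ is lifted via any edge of $Y$ above $xx'$, which exists because $p$ is strict. Finally, $\bar q\circ\pi=p\circ\sigma$.

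\emph{Step 3: lifting.} By property $(\star)$ of Theorem~\ref{camerlo1} there is a quotient map $t\colon\Pe\to T$ with $\pi\circ t=r$. Since $\pi$ is injective over the vertices $N_y$, we get $t^{-1}(a_y)=N_y$. Put $h=\sigma\circ t$. It is a continuous quotient map, $p\circ h=\bar q\circ\pi\circ t=q$, and $h\circ\eta=g$ because $\eta[K_y]\subseteq N_y=t^{-1}(a_y)$.

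The main obstacle is Step 1 together with the verification of strictness in Step 2. The neighbourhoods $N_y$ must be shrunk enough that every edge of $Y$ can be realised away from $\eta[K]$, i.e.\ inside the remainders $R_x$; this is exactly where nowhere density and the isolatedness of the points of $\eta[K]$ are used. Lemmas~\ref{l:3.2} and~\ref{l:3.3} supply the separating clopen sets and the witness edges needed for it.
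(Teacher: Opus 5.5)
The overall route is sound and differs from the paper's: you refine $q$ to $Z$, form the amalgam $T$, and lift $r$ through $\pi$ by $(\star)$, whereas the paper reduces $p$ to quotients identifying two vertices and defines $h$ piecewise. The reduction of (G1) to (G2), the maps $r,\bar q$, the homomorphism checks, strictness of $\sigma$ and Step~3 are all fine.

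\textbf{The gap: strictness of $\pi$.} You lift an edge $N_yR_x$ of $Z$ by $a_y$ and $(R_x,y)$, but $(R_x,y)$ is a vertex of $T$ only if $x=p(y)$. Properties (a)--(d) put $N_y$ inside the fibre $q^{-1}(p(y))$. They do not stop points of $N_y$ from being adjacent to points of another fibre $q^{-1}(x)$ with $x\neq p(y)$; only finitely many witness points are removed. Suppose such an edge lands in $R_x$ and $y$ has no neighbour in $p^{-1}(x)$; strictness of $p$ only gives \emph{some} edge of $Y$ over $p(y)x$. Then $N_yR_x$ has no lift to $T$, and in fact no homomorphism $h$ can be constant $y$ on $N_y$.

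For instance, let $X=\{x_0,x_1\}$ with $x_0x_1\in E(X)$, let $Y=\{y_0,y_0',y_1\}$ have $y_0'y_1$ as its only non-loop edge, and let $g\equiv y_0$. The set of points of $q^{-1}(x_0)$ adjacent to $q^{-1}(x_1)$ is infinite: split $x_0$ into $n$ pairwise non-adjacent vertices, each adjacent to $x_1$, and apply $(\star)$. So nothing in (a)--(d) keeps $N_{y_0}$ away from it.

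\textbf{The repair.} Add one more property: (e) there is no edge between $N_y$ and $\Pe\setminus q^{-1}(p(y))$. This follows from Lemma~\ref{l:3.2} applied to $\eta[K_y]$ and the clopen set $\Pe\setminus q^{-1}(p(y))$. That application is allowed precisely because every point of $\eta[K]$ is an isolated vertex of $\Pe$. With (e), every non-loop edge of $Z$ at $N_y$ goes to $R_{p(y)}$ and lifts to $a_y\sim(R_{p(y)},y)$, and the rest of your argument goes through. You should also discard $a_y$ when $K_y=\emptyset$, since $N_y$ may then be empty.

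\textbf{Your remark about the weaker reading is false.} It is not merely a matter of more care. Suppose $\eta(k)$ is adjacent to some $z\neq\eta(k)$. Take a finite quotient $q$ with $q(z)\neq q(\eta(k))=x$, and let $f=q\circ\eta$. Split $x$ into a vertex $x^a$ adjacent only to itself and a vertex $x^b$ carrying all edges of $x$. Let $g$ send $f^{-1}(x)$ to $x^a$ and agree with $f$ elsewhere. Any $h$ as in (G2) maps $\eta(k)$ to $x^a$, hence maps $z$ to $x^a$, forcing $q(z)=x$, a contradiction. So isolatedness in $\Pe$, not just inside $\eta[K]$, is necessary, and (e) is exactly where it enters.
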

\begin{proof}
Since $\mathfrak{C}_K$ has the terminal object 	it's  enough to prove  condition (G2), i.e. given  $X,Y\in\kfF$, a quotient map $f\colon X\to Y$,
	 a continuous map $b\colon K\to Y,$ and a continuous surjection $g\colon \Pe\to X$ such that $g\circ\eta=f\circ b$, there exists a quotient map $h\colon \Pe\to Y$ such that
	$f\circ h=g$   and $h\circ \eta=b$, that is the following diagram 
		\amor
	$$\xymatrix@R=2.5pc@C=2.9pc{
		K\ar[d]_{b}\ar@{^(->}[r]^{\eta}&\Pe\ar@{..>>}[dl]_(.49){h}\ar@{->>}[d]^{g}\\
		Y\ar@{->>}[r]_{f}&X}$$commutes.
		We can assume that there is only one $x_0\in X$ such that $|f^{-1}(x_0)|=2$ and $f$ is one-to-one on $Y\setminus f^{-1}(x_0)$, because every quotient map between finite graphs is the composition of finitely many such maps.  
		
		Let $f^{-1}(x_0)=\{y_0,y_1\}$ and  $W=g^{-1}(x_0)$ and $F_0=\eta[b^{-1}(y_0)], F_1=\eta[b^{-1}(y_1)]$. The sets $F_0,F_1$ are closed disjoint of   $W\cap \eta[K]$, by Lemma \ref{l:3.2} there are disjoint clopen sets $W_0',W_1'\subseteq \Pe$ such that $W_0'\cup W_1'=\Pe$ and $F_i\subseteq W_i'$ and   $(a,b)\not\in E(\Pe)$ for all $a\in W_0'$ and $b\in W_1'.$ Let $W_i=W\cap W_i'$.
		
		There are two cases: either $(y_0,y_1)\not\in E(Y)$ or
		$(y_0,y_1)\in E(Y).$
		
		(1)  If $(y_0,y_1)\not\in E(Y),$ then we define $h$ as follows
		
		$$h(x)=\begin{cases}
			f^{-1}(g(x)),&\mbox{ if } x\in \Pe\setminus W,\\
			y_0,&\mbox{ if } x\in W_0,\\
			y_1,&\mbox{ if } x\in W_1.
		\end{cases}$$
		(2)  Assume that $(y_0,y_1)\in E(Y).$ Since $\eta[K]$ is the nowhere dense set there is clopen non-empty set 
		$W_2\subseteq W$ and $W_2\cap \eta[K]=\emptyset.$ By Lemma \ref{l:3.2} there are $ a,b\in W_2$ such that 
		$(a,b)\in E(\Pe).$  Let $U_a,U_b$ be clopen disjoint neighborhood of $a,b$, respectively, such that $U_a\cup U_b=W_2.$ Then we define $h$ as following
		
		$$h(x)=\begin{cases}
			f^{-1}(g(x)),&\mbox{ if } x\in \Pe\setminus W,\\
			y_0,&\mbox{ if } x\in (W_0\setminus W_2)\cup U_a,\\
			y_1,&\mbox{ if } x\in (W_1\setminus W_2)\cup U_b.
		\end{cases}$$

		It is clear, in both cases, that $h$ is continuous quotient and $f\circ h=g$. It remains to check that $h\circ \eta=b.$ Let $x\in K$. We can assume $b(x)=y_0$ or $b(x)=y_1$. Otherwise, it is obvious that $h(\eta(x))=b(x).$ So,  if $b(x)=y_i$ then $\eta(x)\in W_i'\subseteq W_i$ and $h(\eta(x))=y_i=b(x)$ for each $i=0,1$.
		
\end{proof}
In the next theorem we show that $\Pe$ has the Knaster--Reichbach type property \cite{KnaRei}.

\begin{theorem}\label{main}  Every topological isomorphism $h\colon K\to L$ of nowhere dense closed subsets $K, L$ of  $\Pe$  such that  each $K, L$ are consisting of isolated vertices in $K$ and $L$, respectively,	can be extended to a  topological isomorphism of  $\Pe$.
\end{theorem}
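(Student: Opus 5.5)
The plan is to reduce the statement to the uniqueness of generic objects in the comma category $\fL_K$. Regard $K$ itself as an abstract profinite space. There are two embeddings into $\Pe$: the inclusion $\eta_K\colon K\hookrightarrow \Pe$, and the composition $\eta_L=\iota_L\circ h\colon K\to\Pe$, where $\iota_L$ is the inclusion of $L$. Both images, $K$ and $L$, are nowhere dense closed sets consisting of isolated vertices (inside themselves). Since $h$ is a homeomorphism, $\eta_L$ is an embedding with image $L$. So Theorem~\ref{generic} applies to both maps, and $\eta_K$ and $\eta_L$ are $\fC_K$-generic objects of $\fL_K$.

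The next step is to show that any two $\fC_K$-generic objects in $\fL_K$ are isomorphic in $\fL_K$.
\begin{enumerate}
\item[(a)] Write each generic object as the limit of an inverse sequence in $\fC_K$, using (L2). For the inclusion of $K$, take a sequence of finite quotients $q_n\colon\Pe\to X_n$ separating points, and use the objects $q_n\circ\eta_K$.
\item[(b)] Run a back-and-forth argument with (G1), (G2) and (L3). Use (G2) for $\eta_L$ to lift $\fC_K$-arrows from $\eta_K$'s sequence, factor back through finite stages of $\eta_L$'s sequence via (L3), and alternate the two directions.
\item[(c)] This yields two inverse sequences in $\fC_K$ with interleaving arrows that make all triangles commute.
\end{enumerate}
Passing to the limit gives mutually inverse $\fL_K$-arrows, that is, a continuous bijection $H\colon\Pe\to\Pe$ with $H\circ\eta_K=\eta_L$. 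Alternatively, one can show that any generic object is the limit of a \fra sequence in $\fC_K$ and then invoke uniqueness of \fra sequences from \cite{KubFra}.

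It then remains to check that $H$ is a topological automorphism extending $h$.
\begin{enumerate}
\item[(i)] $H$ is a homeomorphism, being a continuous bijection of compact metric spaces.
\item[(ii)] $H$ and $H^{-1}$ are quotient maps, hence homomorphisms, so $H$ is a graph isomorphism. Indeed, a bijective strict homomorphism reflects edges.
\item[(iii)] For $x\in K$ we have $H(x)=H(\eta_K(x))=\eta_L(x)=h(x)$, so $H$ extends $h$.
\end{enumerate}

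The main obstacle is step (b), the back-and-forth. Arrows in $\fL_K$ go from bigger to smaller objects, and condition (G2) only gives lifts against $\fC_K$-arrows between finite objects. The arrows therefore have to be built at finite levels, carefully compatible with the maps from $K$, so that the limit maps exist and compose to identities. Epimorphicity (L0) should be what gives the needed uniqueness when identifying the compositions with identities. If the direct construction becomes messy, I would instead prove that the sequence of finite quotients of $\eta_K$ can be refined to a \fra sequence in $\fC_K$ using (G2), and then cite the uniqueness theorem for \fra sequences.
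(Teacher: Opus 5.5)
This is essentially the paper's proof. Theorem~\ref{generic} makes both the inclusion $\eta_K$ and $\iota_L\circ h$ into $\fC_K$-generic objects of $\fL_K$. Uniqueness of generic objects, which the paper simply cites as \cite[Theorem 4.6]{KubFra}, then yields an $\fL_K$-isomorphism $H$ with $H\circ\eta_K=\eta_L$, and your checks (i)--(iii) that such an $H$ is a topological automorphism extending $h$ are correct. For your second route to uniqueness no refinement is needed: by (G1), (G2), (L3) and (L0), any inverse sequence in $\fC_K$ whose limit is generic already satisfies (U) and (A).

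One caveat, inherited from the paper rather than introduced by you. You read the hypothesis as ``no nontrivial edges inside $K$ and inside $L$'', and under that reading both Theorem~\ref{generic} and the statement are false. Take $p\in\Pe$ having a neighbour $q\neq p$ (Lemma~\ref{l:3.3}) and $z\in\Pe$ of degree zero. The singletons $\{p\}$ and $\{z\}$ are closed, nowhere dense (as $\Pe$ is a Cantor set) and edgeless, and $p\mapsto z$ is a topological isomorphism between them. Yet no automorphism of $\Pe$ can send $p$ to $z$, because automorphisms preserve degrees. So the step ``Theorem~\ref{generic} applies to both maps'' requires $K$ and $L$ to consist of vertices of degree zero in $\Pe$. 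Under that stronger hypothesis Theorem~\ref{generic} does hold, and your argument goes through unchanged.
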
	
\begin{pf}
	Let $h\colon K\to L$ be a topological isomorphism of nowhere dense closed subsets of $\Pe$ such that each $K, L$ are consisting of isolated vertices in $K$ and $L$.
	Denote by $\eta\colon K\hookrightarrow \Pe$ 
	and $\eta_1\colon L\hookrightarrow \Pe$ the inclusion mappings. By Theorem \ref{generic}, both $\eta$ and $\eta_1\circ h$ are $\mathfrak{C}_K$-generic, therefore by uniqueness (\cite[Theorem 4.6]{KubFra}) there
	exists an $\mathfrak{L}_K$-isomorphism $H \colon  \Pe\to\Pe$ from $\eta$ to $\eta_1\circ h$. This means that there exists an  topological isomorphism of $\Pe$ extending $h$.
\end{pf}

\begin{theorem}
		Let $K$ be a  profinite graph.
	Then there exist a topological embedding $ \eta\colon K \to\Pe$ and a  continuous quotient map $ f\colon \Pe\to K$ such that $ \eta[K]$ is nowhere dense set and $\eta[K]$ consisting of isolated vertices in $\eta[K]$ and $f \circ \eta = \id K$.
\end{theorem}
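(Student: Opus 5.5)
The plan is to realize $\eta$ as the limit map of a \fra sequence in $\fC_K$, where $K$ is regarded only as a profinite topological space. The graph structure of $K$ is then recovered through genericity. By Proposition~\ref{prop:1} we fix a \fra sequence $\cev{\phi}=(\phi_n\colon K\to U_n)$ in $\fC_K$. By Theorem~\ref{fra-sequ}(1), $\cev{u}=(U_n)$ is a \fra sequence in $\kfF$. By the uniqueness part of Theorem~\ref{camerlo1}, $\lim\cev{u}$ is topologically isomorphic to $\Pe$, so we may put $\eta=\phi_\omega\colon K\to\Pe$. By Theorem~\ref{fra-gen} and Lemma~\ref{l:3.4}, $\eta$ is $\fC_K$-generic in $\fL_K$.

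Next we construct $f$. Write the profinite graph $K$ as $K=\lim(K_n,k^{n+1}_n)$ with finite graphs $K_n$ and quotient bonding maps, and let $k_n\colon K\to K_n$ be the projections. Each $k_n\colon K\to K_n$ is a $\fC_K$-object, and $k^{n+1}_n$ is a $\fC_K$-arrow from $k_{n+1}$ to $k_n$. By (G1) there is an $\fL_K$-arrow $h_0\colon\Pe\to K_0$ with $h_0\circ\eta=k_0$. Inductively, having $h_n$ with $h_n\circ\eta=k_n$, we apply (G2) to the $\fC_K$-arrow $k^{n+1}_n$ and the $\fL_K$-arrow $h_n$. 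This gives a quotient map $h_{n+1}\colon\Pe\to K_{n+1}$ with $k^{n+1}_n\circ h_{n+1}=h_n$ and $h_{n+1}\circ\eta=k_{n+1}$. The compatible family $(h_n)$ induces a continuous map $f\colon\Pe\to K$ with $k_n\circ f=h_n$ for every $n$, and hence $f\circ\eta=\ide_K$.

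It remains to check that $f$ is a quotient map. Surjectivity is immediate from $f\circ\eta=\ide_K$. For strictness, fix an edge $(p,q)\in E(K)$ and consider the sets
\[
C_n=\{(a,b)\in E(\Pe)\colon h_n(a)=k_n(p),\ h_n(b)=k_n(q)\}.
\]
Each $C_n$ is nonempty, because $h_n$ is a quotient map and $(k_n(p),k_n(q))\in E(K_n)$. Each $C_n$ is closed, because $E(\Pe)$ is closed, and the sets are decreasing by the compatibility of the $h_n$. By compactness their intersection is nonempty, and any $(a,b)$ in it is an edge of $\Pe$ with $f(a)=p$ and $f(b)=q$. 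This compactness argument is the one point requiring care, the inductive use of (G2) being routine.

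Finally we verify the properties of $\eta$. It is injective because it has the left inverse $f$. Being a continuous injection of a compact space into a Hausdorff space, it is a topological embedding. Its image is nowhere dense by Theorem~\ref{fra-sequ}(3). To see that $\eta[K]$ consists of isolated vertices in $\eta[K]$, take distinct $a,b\in\eta[K]$ and apply $(\dst)$ to the closed sets $A=\{a\}$ and $B=\{b\}$. This yields clopen sets $W_A\ni a$ and $W_B\ni b$ with no edges between them, so $(a,b)\notin E(\Pe)$. Hence the induced graph on $\eta[K]$ has only loops.
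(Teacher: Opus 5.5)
Your proof is correct and has the same skeleton as the paper's: take $\eta=\phi_\omega$ for a Fra\"iss\'e sequence in $\fC_K$, with $K$ viewed only as a space, and obtain $f$ as an $\fL_K$-arrow from $\eta$ to the object $\id{K}$. The differences lie in how the two ingredients are justified.

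The paper gets genericity of $\eta$ from Theorem~\ref{generic} and then simply cites (G1). However, (G1) as stated only provides arrows into $\fC_K$-objects, and $\id{K}$ has a profinite target. You instead get genericity directly from Theorem~\ref{fra-gen} and Lemma~\ref{l:3.4}. You then prove the needed extension of (G1) by hand: you iterate (G2) along an inverse sequence $(K_n)$ for $K$, and you check strictness of the limit map by the compactness argument with the closed sets $C_n$. You do not state that $f$ preserves edges, but this is immediate, since each $h_n=k_n\circ f$ does and $E(K)$ is the limit relation.

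Your version has three advantages. It is self-contained. It gets injectivity of $\eta$ for free from $f\circ\eta=\id{K}$, so part~(2) of Theorem~\ref{fra-sequ} is not needed. It also avoids any dependence on Theorem~\ref{generic}, whose proof never checks the edges of $\Pe$ joining $g^{-1}(x_0)$ to its complement. The paper's version is shorter only because it leaves the passage from finite to profinite targets implicit.
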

\begin{pf}
	By Proposition \ref{prop:1} there exists a \fra sequence in $\mathfrak{C}_K.$ Using Theorem \ref{fra-sequ} we get 
	a topological embedding $ \eta\colon K \to\Pe$ and a  continuous quotient map $ f\colon \Pe\to K$ such that $ \eta[K]$
	 is nowhere dense set and $\eta[K]$ consisting of isolated vertices in $\eta[K]$. The identity map $\id K$ form 
	 profinite space $K$ to profinite graph $K$ is an object of $\mathfrak{L}_K$. Therefore by Theorem \ref{generic}  and 
	 (G1) there exists a $\mathfrak{L}_K$-arrow $f\colon \Pe\to K$ form $\eta$ to $\id K$.
	\end{pf}

\end{document}